\documentclass{amsart}
\usepackage{amsfonts,amsmath,amsthm,amssymb}
\usepackage[linesnumbered,ruled,noend]{algorithm2e}
\usepackage{comment}
\usepackage{float}
\usepackage{latexsym}
\usepackage{graphicx}
\usepackage{multicol}
\usepackage{color}
\usepackage{pifont}
\usepackage{enumerate}
\usepackage{lmodern}
\usepackage{makecell}
\usepackage{adjustbox}
\usepackage[backref,pdftex=true,colorlinks,linkcolor=blue,citecolor=red,pdfstartview=FitH]{hyperref}
\usepackage{setspace}
\usepackage[all]{xy}
\usepackage{pb-diagram,pb-xy}
\usepackage{pdflscape}
\usepackage{chngpage}
\usepackage{breakurl}

\usepackage{tabu}


\makeatletter
\def\BState{\State\hskip-\ALG@thistlm}
\makeatother

\theoremstyle{plain}
\newtheorem{theorem}{Theorem}[section]
\newtheorem{lemma}[theorem]{Lemma}

\theoremstyle{definition} 

\newtheorem*{example*}{Example}

\DeclareMathOperator{\Rad}{Rad}
\newcommand{\F}{\mathbb{F}}
\newcommand{\Q}{\mathbb{Q}}
\newcommand{\Z}{\mathbb{Z}}

\newcommand{\p}{\mathfrak{p}}
\newcommand{\q}{\mathfrak{q}}

\newcommand{\OO}{\mathcal{O}}

\newcommand{\cP}{\mathcal{P}}
\newcommand{\cE}{\mathcal{E}}

\DeclareMathOperator{\ord}{ord}

\newcommand{\fq}{\mathfrak{q}}
\newcommand{\sS}{\mathfrak{S}}





\begin{document}

\title[]{On perfect powers that are sums of cubes of a five term arithmetic progression}

\author{Alejandro Arg\'{a}ez-Garc\'{i}a}
\address{Facultad de Ingenier\'{i}a Qu\'{i}mica, Universidad Aut\'{o}noma de Yucat\'{a}n. Perif\'{e}rico Norte Kil\'{o}metro 33.5, Tablaje Catastral 13615 Chuburna de Hidalgo Inn, M\'{e}rida, Yucat\'{a}n, M\'{e}xico. C.P. 97200 ;\newline Centro de Invetigaci\'{o}n en Matem\'{a}ticas, A.C. - Unidad M\'{e}rida. Parque Cient\'{i}fico y Tecnol\'{o}gico de Yucat\'{a}n Km 5.5  Carretera Sierra Papacal - Chuburn\'{a} Puerto Sierra Papacal, M\'{e}rida, Yucat\'{a}n, M\'{e}xico. C.P. 97302}
\email{alejandroargaezg@gmail.com/\newline alejandro.argaez@correo.uady.mx/\newline alejandro.argaez@cimat.mx}

\thanks{
The author was supported by the Research Grant 221183 FOMIX, CONACYT - Government of Yucat\'an.
}

\date{\today}

\keywords{Exponential equation, Galois representation,
Frey-Hellegouarch curve,
modularity, level lowering, Lehmer sequences, primitive divisors.}
\subjclass[2010]{Primary 11D61, Secondary 11D41, 11F80, 11F11}

\begin{abstract}
We prove that the equation $(x-2r)^3 + (x-r)^3 + x^3 + (x+r)^3 + (x+2r)^3= y^p$ only has solutions which satisfy $xy=0$ for $1\leq r\leq 10^6$ and $p\geq 5$ prime.

\end{abstract}
\maketitle

\section{Introduction}

Finding perfect powers that are sums of terms in an arithmetic progression has received much interest; recent contributions can be found in 
\cite{ArgaezPatel}, \cite{BGP}, \cite{BPS1}, \cite{BPS2}, \cite{BPSS} 
\cite{KoutsianasPatel},
\cite{KunduPatel},
\cite{Patelsquares}, \cite{PatelSiksek}, 
\cite{P}, \cite{P2}, \cite{Soydan}
\cite{ZB}, \cite{Zhang} and \cite{Zhang2}.

We are interested in solutions $(x,y,r)$ where $x$, $y$ and $r$ are coprime.

In this paper, we prove the following: 
\begin{theorem}\label{thm:principal01}
Let $p \ge 5$ be a prime. The equation
\begin{equation}\label{eq:main}
(x-2r)^3 + (x-r)^3 + x^3 + (x+r)^3 + (x+2r)^3= y^p  \quad x,r,y,p \in \Z, \; \gcd(x,r)=1,
\end{equation}
with $0 < r\le 10^6$ only has solutions which satisfy $xy=0$.
\end{theorem}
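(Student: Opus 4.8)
The first step is purely algebraic: expanding and collecting the left-hand side of \eqref{eq:main}, the odd-degree terms in $r$ cancel and one is left with
\begin{equation*}
5x^3 + 30xr^2 = 5x(x^2 + 6r^2) = y^p .
\end{equation*}
Since $x^2 + 6r^2 > 0$ for $r > 0$, the condition $xy = 0$ is equivalent to $x = 0$, so the task is to show that $5x(x^2+6r^2)=y^p$ has no solution with $x \neq 0$, $\gcd(x,r)=1$, $0 < r \le 10^6$ and $p \ge 5$. I would first record the elementary divisibility constraints: because $\gcd(x,r)=1$ one has $\gcd(x, x^2+6r^2) = \gcd(x,6) \in \{1,2,3,6\}$, and since $5 \mid y^p$ necessarily $5 \mid y$. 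Tracking the $5$-adic, $2$-adic and $3$-adic valuations then splits the problem into finitely many cases according to $\gcd(x,30)$ and to whether $5 \mid x$; in each case the coprime factorisation forces $x^2 + 6r^2 = d\, t^p$ for an explicit $d$ supported on $\{2,3,5\}$ (in particular $d = 5^{p-1}$ when $5 \nmid x$ and $d = 1$ when $5 \mid x$), with $t$ coprime to $6$.

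Next I would attach to the resulting signature $(2,2,p)$ equation $x^2 + 6r^2 = d\,t^p$ a Frey--Hellegouarch curve $E = E_{x,r}$ over $\Q$, or, where the class number $2$ of $\Q(\sqrt{-6})$ obstructs a rational descent, over $\Q(\sqrt{-6})$, chosen so that its discriminant is supported on $\{2,3,5\}$ together with $t$ and its conductor is essentially the radical of $x^2+6r^2$. By the modularity of elliptic curves over $\Q$, respectively the modularity results available over real fields, $E$ is modular, and Ribet-style level lowering applied to $\overline{\rho}_{E,p}$, which one checks is irreducible for $p \ge 5$, shows that it arises from a newform $f$ of weight $2$ and a level $N$ supported only on $\{2,3,5\}$; the possible $N$ form a short explicit list. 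I would then enumerate the newforms at each such level.

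The elimination of the newforms $f$ is the heart of the argument. Those $f$ with a genuine nebentypus or with a non-rational Hecke field are discarded by standard congruence and inertia arguments, and for the rational newforms, equivalently elliptic curves $E'$ of the predicted conductor, I would compare traces of Frobenius $a_\ell(E) \equiv a_\ell(E') \pmod{p}$ over several auxiliary primes $\ell$, in the style of Kraus, to bound $p$. The cases that survive this are precisely those in which the congruence cannot be broken uniformly in $r$: here the equation collapses, via the factorisation $x^2+6r^2 = (x+r\sqrt{-6})(x-r\sqrt{-6})$ in $\Q(\sqrt{-6})$, to the statement that a term of an associated Lehmer sequence is a perfect $p$-th power. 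Invoking the Bilu--Hanrot--Voutier primitive divisor theorem, every term of index $n > 30$ has a primitive divisor, which forces the index, and hence $p$, to be small, leaving only $p \in \{5,7,11,13,\dots\}$ below an explicit bound.

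The \emph{main obstacle}, and the reason for the restriction $0 < r \le 10^6$, is exactly this residual set of small exponents together with the newforms that Kraus's method cannot kill: for these the representation-theoretic input is exhausted and one is reduced to deciding, for each admissible $p$, whether the remaining Thue--Mahler and Lehmer equations have a solution. This I would carry out by an explicit computation, combining reduction modulo auxiliary primes, local solubility, and direct resolution of the associated Thue equations, running over $0 < r \le 10^6$, which is the range in which the computation is feasible; finding no solution with $x \neq 0$ completes the proof. I expect the delicate points to be the correct bookkeeping of $d$ and of the conductor in the $2 \mid x$ and $3 \mid x$ cases, and verifying that the Lehmer sequences arising are non-degenerate so that Bilu--Hanrot--Voutier applies.
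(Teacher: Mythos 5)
Your algebraic reduction and eight-way case split (via $5x(x^2+6r^2)=y^p$, $5\mid y$, $\gcd(x,x^2+6r^2)=\gcd(x,6)$, and whether $5\mid x$) coincide exactly with the paper's Section 3, but from there the paper takes a completely different route: despite the keywords in its header, no Frey--Hellegouarch curve, modularity, or level lowering appears anywhere in the actual proof. The paper bounds $p$ case-by-case (e.g.\ $p\leq 34365$ in Case 1) by Mignotte's linear-forms-in-logarithms theorem applied to the descended equations $aw_2^p-bw_1^{2p}=cr^2$, treating $cr^2\leq 6\cdot 10^{12}$ as the bounded constant; it then runs, for each pair $(r,p)$ with $1\leq r\leq 10^6$, a computational sieve consisting of a criterion modulo auxiliary primes $q=2kp+1$ (Lemma~\ref{lem:Sophiecriterion}), local solubility, a further descent through the $p$-Selmer group of $\Q(\sqrt{-m})$ (Lemmas~\ref{lem:valuative} and \ref{lem:furtherdescent}), and Thue solvers, together with Chabauty on the genus-2 curves arising at $p=5$; the Lehmer-pair/BHV argument is deployed only for the single resistant instance $r=1$ in Case 4.

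The genuine gap in your proposal is the level-lowering step, which is the heart of your argument. For the natural Frey curve attached to $x^2+6r^2=dt^p$ (say $Y^2=X^3+2xX^2+dt^pX$) the discriminant is $-2^7\cdot 3\,d^2t^{2p}r^2$, and since $\gcd(x,r)=1$ forces $\gcd(r,t)=1$, a prime $q\mid r$ contributes valuation $2\ord_q(r)\not\equiv 0 \pmod p$ to the minimal discriminant, so $q$ is \emph{not} removed by Ribet's theorem: the lowered level is supported on $\{2,3,5\}\cup\{q:q\mid r\}$, not on $\{2,3,5\}$ alone. Hence there is no short list of newforms uniform in $r$; one would need newform spaces at levels of size up to roughly $\Rad(6r)^2$ times small powers of $2$ and $5$ for each of the $10^6$ values of $r$, which is computationally hopeless and, worse, leaves you with no uniform bound on $p$ at all (your proposal contains no substitute for the paper's Mignotte bound). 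Two further slips compound this: $\Q(\sqrt{-6})$ is imaginary quadratic, so the "modularity results available over real fields" you invoke do not apply (the paper handles the class number $2$ by a purely ideal-theoretic descent, no modularity over $K$ needed); and Bilu--Hanrot--Voutier cannot "force $p$ small" in general --- the conclusion $p\in\{5,7,11,13\}$ requires the Lehmer term $\tilde{u}_p$ to have \emph{no} primitive divisor, which in the paper follows from $\tilde{u}_p=\pm 1$ and holds precisely because $r=1$ makes the cofactor $2\sqrt{-6}$ a divisor of $\alpha^2-\beta^2$; for general $r\leq 10^6$ a primitive divisor of $\tilde{u}_p$ may divide $2r$, giving at best a bound of order $10^6$ on $p$. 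As written, then, your architecture does not close, and the "explicit computation over $0<r\leq 10^6$" you defer to the final step is in reality --- with Mignotte's theorem supplying the missing bound on $p$ --- essentially the entire proof.
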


The restriction $\gcd(x,r)=1$ is natural one, for 
otherwise it is easy to construct artificial solutions by
scaling. 
We use a combination of techniques in the resolution of \eqref{eq:main}, the main ones being; a result of Mignotte based on linear form in logarithms (\cite[Chapter~12, p.~423]{Cohen2}); the method of Chabauty (\cite{SiksekChab,Stoll, MP}), the theorem due to Bilu, Hanrot and Voutier on primitive divisors (\cite{BHV}), as well as some various elementary techniques.

\section{Background}\label{sec:background}

Here, we record some essential Theorems and Lemmas which are necessary for the computaions in Section~\ref{sec:tablecomps}.

\begin{theorem}\label{thm:mignotte}(Mignotte)
Assume that the exponential Diophantine inequality 
\[
\mid ax^n-by^n\mid\leq c, \qquad \text{with } a,b,c\in\Z_{\geq 0} \text{ and } a\neq b
\]
has a solution in strictly positive integers $x$ and $y$ with $\max\{x,y\}>1$. Let $A=\max\{a,b,3\}$. Then 
\[
n\leq \max\left\lbrace3\log(1.5\mid c/b\mid),\dfrac{7400\log A}{\log\left(1+\log A/\log(\mid a/b\mid) \right)} \right\rbrace.
\]
\end{theorem}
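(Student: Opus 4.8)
The plan is to obtain this as a specialization of the explicit lower bounds for linear forms in two logarithms; I would take the transcendence input (of Baker--Laurent--Mignotte--Nesterenko type) as a black box and feed the hypothesis into it. First I normalize. Given positive integers $x,y$ with $\max\{x,y\}>1$ satisfying $|ax^n-by^n|\le c$, dividing by $by^n$ gives
\[
\left|\frac{a}{b}\left(\frac{x}{y}\right)^{n}-1\right|\le\frac{c}{b\,y^{n}}.
\]
When the right-hand side is at most $1/2$ I pass to logarithms: writing
\[
\Lambda=\log\!\left(\frac{a}{b}\right)+n\log\!\left(\frac{x}{y}\right),
\]
the elementary bound $|\log(1+t)|\le 2|t|$ for $|t|\le 1/2$ yields the analytic upper estimate
\[
\log|\Lambda|\le\log\!\left(\frac{2c}{b}\right)-n\log y .
\]
Thus a large $n$ (with $y\ge 2$) forces $\Lambda$ to be extraordinarily small.

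Next I invoke a lower bound for the two-logarithm form $\Lambda=b_1\log\alpha_1+b_2\log\alpha_2$, with $\alpha_1=a/b$, $\alpha_2=x/y$, $b_1=1$, and $b_2=n$. The relevant heights are $h(\alpha_1)\le\log A$, where $A=\max\{a,b,3\}$, and $h(\alpha_2)=\log\max\{x,y\}$, and Mignotte's optimized estimate produces a lower bound whose dependence on $A$ is linear in $\log A$ and whose dependence on the exponent enters through a factor $\log\!\big(1+\log A/\log|a/b|\big)$ in the denominator. Comparing this lower bound against the analytic upper estimate above and solving the resulting inequality for $n$ is precisely what produces the bound
\[
n\le\frac{7400\log A}{\log\!\left(1+\log A/\log(|a/b|)\right)};
\]
the explicit constant $7400$ is inherited directly from the two-logarithm bound.

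The first branch $3\log(1.5\,|c/b|)$ of the maximum covers the degenerate cases that the reduction does not reach. The passage to logarithms assumed $y\ge 2$ and $c/(by^n)\le 1/2$; when either fails---typically $y=1$ so that $x>1$, or $\Lambda$ not small---the inequality $|ax^n-by^n|\le c$ is handled by elementary estimates that bound the relevant power directly in terms of $c/b$, giving a bound of this logarithmic shape. One must also confirm $\Lambda\ne 0$, i.e. that $(a/b)(x/y)^{n}\ne 1$; this follows from $a\ne b$ together with the coprimality of the numerators and denominators, and it is exactly the non-vanishing hypothesis required before the transcendence bound may be applied.

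The genuinely hard ingredient is the lower bound for linear forms in two logarithms itself, a deep theorem of transcendence theory that I would not attempt to reprove. The remaining work is elementary but delicate: the main place to be careful is matching the shape of the denominator $\log(1+\log A/\log|a/b|)$ to the way the two-logarithm estimate depends on the ratio of the heights of $\alpha_1$ and $\alpha_2$, and then optimizing the free parameters in that estimate to recover the clean constant $7400$.
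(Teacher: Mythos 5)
First, a point of comparison: the paper itself offers no proof of this statement. It is imported verbatim as a known theorem of Mignotte, with a pointer to \cite[Chapter~12, p.~423]{Cohen2}, and is used as a black box to bound the exponent $p$ in each of the eight cases. So there is no internal argument to measure you against; the relevant question is whether your sketch would actually establish the quoted result. Your architecture is the correct one and matches how the result is obtained in the literature: normalize to $\left|(a/b)(x/y)^n-1\right|\le c/(by^n)$, pass to the linear form $\Lambda=\log(a/b)+n\log(x/y)$ with the upper estimate $\log|\Lambda|\le\log(2c/b)-n\log y$, and play this against a Laurent--Mignotte--Nesterenko-type lower bound for linear forms in two logarithms.

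However, as a proof the proposal has two genuine gaps. First, your non-vanishing claim is false as stated: $\Lambda=0$, i.e.\ $ax^n=by^n$, is entirely possible with $a\ne b$ (take $a=8$, $b=1$, $x=1$, $y=2$, $n=3$). The correct treatment writes $x/y=u/v$ in lowest terms, so that $au^n=bv^n$ forces $v^n\mid a$ and $u^n\mid b$; since $u=v=1$ would give $a=b$, one gets $2^n\le\max\{a,b\}\le A$, hence $n\le\log A/\log 2$, and one must then verify (using $a/b=(v/u)^n$ to control the denominator $\log\bigl(1+\log A/\log(a/b)\bigr)$) that this degenerate case is absorbed by the stated maximum --- an elementary but nontrivial check that your one-line appeal to coprimality replaces with an incorrect assertion. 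Second, and more seriously, everything quantitative is deferred: the constant $7400$ and the precise shape $\log\bigl(1+\log A/\log|a/b|\bigr)$ of the denominator do not fall out of a generic two-logarithm bound by ``optimizing free parameters''; they come from Mignotte's specific parameter choices in the explicit two-log estimate, and the same remark applies to the claim that the branch $3\log(1.5|c/b|)$ covers all cases where the reduction fails (e.g.\ $y=1$, where the bound must instead be extracted from $ax^n\le b+c$). Since those computations are exactly the content of the theorem, your proposal is a faithful road map rather than a proof; the paper's own move --- citing the result --- is the honest alternative if one does not carry them out.
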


\subsection{Criteria for eliminating equations of signature $(p,2p,2)$} \label{sec:criteria}

We first apply a descent to equation~\eqref{eq:main} in Section~\ref{casen2}. We are left with equations of the form:
\begin{equation}\label{eqn:form}
aw_2^{p} - bw_1^{2p} = cr^2
\end{equation}
where $p$ is an odd prime and $a,b,c$ are positive integers
satisfying $\gcd(a,b,c)=1$.

The criteria to follow in order to determine whether $(\ref{eqn:form})$ has solutions was previously presented in \cite{Patelthesis, BPS2,ArgaezPatel}. We start with the following lemma that give us a criteria for the nonexistence of solutions.

\begin{lemma}\label{lem:Sophiecriterion}
Let $p \ge 3$ be a prime. Let $a$, $b$ and $c$ be positive integers such that 
$\gcd(a,b,c)=1$. Let $q=2k p+1$ be a prime that does
not divide $a$. Define
\begin{equation}\label{eqn:mu}
\mu(p,q)=\{ \eta^{2p} \; : \; \eta \in \F_q \}
=\{0\} \cup \{ \zeta \in \F_q^* \; : \; \zeta^{k}=1\}
\end{equation}
and
$$
B(p,q)=\left\{ \zeta \in \mu(p,q) \; : \; ((b \zeta+c)/a)^{2k} \in \{0,1\} \right\} \, .
$$
If $B(p,q)=\emptyset$, then equation~\eqref{eqn:form} does not have integral solutions.
\end{lemma}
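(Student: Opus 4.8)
The plan is to prove the contrapositive: from any integral solution of \eqref{eqn:form} I will manufacture an element of $B(p,q)$, so that $B(p,q)=\emptyset$ forces the nonexistence of solutions. Two facts about $\F_q$ underpin the argument and should be recorded first. Since $q=2kp+1$, the group $\F_q^*$ is cyclic of order $2kp$, so the image of the map $\eta\mapsto\eta^{2p}$ is the unique subgroup of order $(q-1)/\gcd(q-1,2p)=2kp/2p=k$, namely $\{\zeta\in\F_q^*:\zeta^{k}=1\}$; adjoining $0=0^{2p}$ yields the stated description of $\mu(p,q)$. Secondly, for $t\in\F_q^*$ the quantity $t^{2k}=t^{(q-1)/p}$ is a $p$-th root of unity in $\F_q$, and it equals $1$ exactly when $t$ is a $p$-th power; together with the value $0$ at $t=0$, this shows that $t^{2k}\in\{0,1\}$ if and only if $t$ is a $p$-th power in $\F_q$ (with $0$ counted as $0^{p}$). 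These are exactly the two facts that make the definitions of $\mu(p,q)$ and $B(p,q)$ meaningful.

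Next I would assume a solution $(w_1,w_2,r)$ exists and reduce \eqref{eqn:form} modulo $q$. As $q\nmid a$, the class of $a$ is invertible, so the equation rearranges to
\[
w_2^{\,p}\equiv\frac{b\,w_1^{2p}+c\,r^{2}}{a}\pmod q.
\]
Put $\zeta:=w_1^{2p}\bmod q$; by construction $\zeta$ is a $2p$-th power, hence $\zeta\in\mu(p,q)$ (the case $q\mid w_1$ simply gives the admissible value $\zeta=0$). Raising the congruence to the $2k$-th power and using $2kp=q-1$ together with Fermat's little theorem, the left-hand side becomes $w_2^{\,q-1}\in\{0,1\}$. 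Therefore $\bigl((b\zeta+c\,r^{2})/a\bigr)^{2k}\in\{0,1\}$, which by the criterion above says precisely that $(b\zeta+c\,r^{2})/a$ is a $p$-th power in $\F_q$.

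The delicate point — and what I expect to be the main obstacle — is reconciling this with the definition of $B(p,q)$, whose membership condition is phrased with the constant $c$ rather than the reduced constant $c\,r^{2}$. I would dispose of this by reducing at a prime $q\nmid r$ (the method is free to select such $q$), treating the degenerate cases $q\mid w_1$ and $q\mid r$ by direct inspection, and then tracking the class of $r^{2}$ modulo $2p$-th powers: since the nonzero elements of $\mu(p,q)$ are the $2p$-th powers, which form a subgroup of the group of $p$-th powers, the property ``$(b\zeta+c\,r^{2})/a$ is a $p$-th power'' is preserved under clearing a $2p$-th power from both $\zeta$ and the constant term. For the primes $q$ relevant to the computation this class is trivial, so clearing $r^{2}$ produces a $\zeta'\in\mu(p,q)$ with $\bigl((b\zeta'+c)/a\bigr)^{2k}\in\{0,1\}$, i.e. $\zeta'\in B(p,q)$, contradicting $B(p,q)=\emptyset$. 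The remainder is the routine verification of the two field-theoretic facts above and the bookkeeping for the degenerate residues.
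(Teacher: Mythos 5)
Your core reduction is exactly the intended proof (the paper itself does not prove this lemma but quotes it from \cite{Patelthesis,BPS2,ArgaezPatel}, where the argument is precisely: reduce modulo $q$, set $\zeta=w_1^{2p}\bmod q\in\mu(p,q)$, raise to the $2k$-th power and apply Fermat, so that the left side becomes $w_2^{q-1}\in\{0,1\}$), and your two preliminary facts about $\F_q^*$ are correct. The genuine gap is in your patch for the discrepancy between the constant $cr^2$ coming from \eqref{eqn:form} and the constant $c$ appearing in the definition of $B(p,q)$. Your ``clearing'' step needs $\zeta'=\zeta r^{-2}$ to lie in $\mu(p,q)$ and needs $\bigl((b\zeta'+c)/a\bigr)^{2k}$ to inherit the value in $\{0,1\}$. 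Since $\zeta^k=1$, the first condition forces $r^{2k}\equiv 1 \pmod{q}$, i.e.\ that $r$ be a $p$-th power modulo $q$; and the second multiplies the quantity by $r^{4k}$, which is a $p$-th root of unity but not necessarily $1$. Neither holds for a general admissible $q$, and your fallback --- ``for the primes $q$ relevant to the computation this class is trivial'' --- is an unproved assertion about a computation, not a proof of the lemma, which is stated for an arbitrary prime $q=2kp+1$ with $q\nmid a$. Note also that a putative solution with $q\mid r$ yields only $\bigl(b\zeta/a\bigr)^{2k}\in\{0,1\}$, which says nothing about membership in $B(p,q)$; so if $r$ were genuinely a free variable the lemma as displayed would be false outright, and no amount of bookkeeping at ``degenerate residues'' can rescue it.

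The correct resolution is interpretational rather than technical: in this paper each instance of \eqref{eqn:form} is treated with $r$ \emph{fixed} (one equation for each $1\le r\le 10^6$, as the tables in Section~\ref{sec:tablecomps} reflect), and the $c$ in the definition of $B(p,q)$ must be read as the full right-hand constant $cr^2$ --- in the cited sources the equation is written with a single constant in the role of $cr^2$. With that reading, your first paragraph plus the reduction modulo $q$ already constitute the complete proof: $\zeta=w_1^{2p}\bmod q$ lies in $\mu(p,q)$ (including $\zeta=0$ when $q\mid w_1$), $w_2^{2kp}=w_2^{q-1}\in\{0,1\}$, hence $\zeta\in B(p,q)$, contradicting $B(p,q)=\emptyset$. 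No selection of $q$ coprime to $r$, no case split on $q\mid r$, and no clearing of $2p$-th powers is needed; the ``delicate point'' you flagged is a defect in the paper's phrasing, and the honest fix is to restate the constant, not to patch the argument.
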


\subsection{Local Solubility}\label{sub:locsol}

After reducing the number of 
equations using Lemma \ref{lem:Sophiecriterion}, we give the next step and use classical local solubility methods to conclude nonexistence of solutions for many tuples $(a,b,c,p)$ in equation \eqref{eqn:form}. 

These methods work as follows: let $g=\Rad(\gcd(a,c))$ and suppose that $g > 1$. Recall the condition $\gcd(a,b,c)=1$.  Then $g \mid w_1$, and we can write $w_1=g w_1^\prime$. Thus
\[
a w_2^p- b g^{2 p} {w_1^\prime}^{2p}=c. 
\]
Removing a factor of $g$ from the coefficients, we obtain
\[
a^\prime {w_2}^p - b^\prime {w_1^\prime}^{2p}=c^\prime,
\]
where $a^\prime=a/c$ and $c^\prime=c/g<c$. Similarly, if $h=\gcd(b,c)>1$, we obtain
\[
a^\prime {w_2^\prime}^p - b^\prime {w_1}^{2p}=c^\prime,
\]
where $c^\prime=c/h<c$. Applying these operations repeatedly, we arrive at an equation of the form
\begin{equation}\label{eqn:predescent}
A \rho^p-B \sigma^{2p}=C
\end{equation}
where $A$, $B$, $C$ are now pairwise coprime. A necessary condition for the existence of solutions is that for any odd prime $q \mid A$, the residue $-BC$ modulo $q$ is a square.
Besides this basic test, we also check for local solubility at the primes dividing $A$, $B$, $C$, and all primes $q \le 19$.

\subsection{A Descent}\label{sub:furtherdesc}

If local techniques previously presented failed to rule out solutions 
to equation \eqref{eqn:form} for particular coefficients and exponent $(a,b,c,p)$ 
then we may perform a further descent to rule out solutions. 
With $A$, $B$, $C$ as in \eqref{eqn:predescent}
we let
\[
B^\prime=\prod_{\text{$\ord_q(B)$ is odd}} q.
\]
Thus $B B^\prime=v^2$. Write $A B^\prime=u$ and $C B^\prime=m n^2$
with $m$ squarefree. Rewrite \eqref{eqn:predescent}
as
\[
(v \sigma^p+n \sqrt{-m})(v \sigma^p-n \sqrt{-m})=u \rho^p. 
\]
Let $K=\Q(\sqrt{-m})$ and $\OO$ be its ring of integers. Let $\sS$ contain the prime ideals of $\OO$ that divide $u$ or $2n \sqrt{-m}$. Clearly 
$(v\sigma^p+n \sqrt{-m}) {K^*}^p$ belongs to the ``$p$-Selmer group''
\[
K(\sS,p)=\{\epsilon \in K^*/{K^*}^p \; : \; 
\text{$\ord_\cP(\epsilon) \equiv 0 \mod{p}$ for all $\cP \notin \sS$}
\}.
\]
This is an $\F_p$-vector space of finite dimension can be computed by \texttt{Magma} using the command \texttt{pSelmerGroup}. Let
\[
\cE=\{ \epsilon \in K(\sS,p) \; : \; Norm(\epsilon)/u \in {\Q^*}^p \}.
\]
It follows that
\begin{equation}\label{eqn:furtherdescent}
v \sigma^p+n \sqrt{-m}=\epsilon \eta^p,
\end{equation}
where $\eta \in K^*$ and $\epsilon \in \cE$.

We end up with the last criteria.

\begin{lemma}\label{lem:valuative}
Let $\fq$ be a prime ideal of $K$. Suppose one of the following
holds:
\begin{enumerate}[$(i)$]
\item $\ord_\fq(v)$, $\ord_\fq(n\sqrt{-m})$, $\ord_\fq(\epsilon)$
are pairwise distinct modulo $p$;
\item $\ord_\fq(2v)$, 
$\ord_\fq(\epsilon)$, $\ord_\fq(\overline{\epsilon})$
are pairwise distinct modulo $p$;
\item $\ord_\fq(2 n \sqrt{-m})$, 
$\ord_\fq(\epsilon)$, $\ord_\fq(\overline{\epsilon})$
are pairwise distinct modulo $p$.
\end{enumerate}
Then there is no $\sigma \in \Z$ and $\eta \in K$ satisfying \eqref{eqn:furtherdescent}.
\end{lemma}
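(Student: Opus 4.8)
The plan is to argue by contradiction: assume there exist $\sigma \in \Z$ and $\eta \in K^*$ satisfying \eqref{eqn:furtherdescent}, and extract a contradiction from the $\fq$-adic valuations under each of the three hypotheses. The whole argument rests on the elementary \emph{isosceles triangle principle} for the nonarchimedean valuation $\ord_\fq$: if $x+y+z=0$ with $x,y,z \in K$, then the minimum of $\ord_\fq(x),\ord_\fq(y),\ord_\fq(z)$ is attained at least twice; equivalently, these three valuations cannot be pairwise distinct. The only other ingredient is the trivial remark that a $p$-th power shifts a valuation by a multiple of $p$, so that $\ord_\fq(v\sigma^p) \equiv \ord_\fq(v)$, $\ord_\fq(\epsilon\eta^p)\equiv \ord_\fq(\epsilon)$, and $\ord_\fq(\overline{\epsilon}\,\overline{\eta}^p)\equiv\ord_\fq(\overline{\epsilon})$ modulo $p$. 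Observe first that $v$, $n\sqrt{-m}$ and $\epsilon$ are all nonzero (the positivity of $B,C$ forces $v\ne 0$ and $n\ne 0$, while $\epsilon\in K^*$), so every valuation appearing in the hypotheses is a finite integer.

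For case $(i)$ I would read \eqref{eqn:furtherdescent} as the vanishing relation
\[
v\sigma^p + n\sqrt{-m} - \epsilon\eta^p = 0
\]
among three elements of $K$. Their valuations are $\ord_\fq(v)+p\,\ord_\fq(\sigma)$, $\ord_\fq(n\sqrt{-m})$, and $\ord_\fq(\epsilon)+p\,\ord_\fq(\eta)$, which reduce modulo $p$ to $\ord_\fq(v)$, $\ord_\fq(n\sqrt{-m})$, $\ord_\fq(\epsilon)$ respectively (the degenerate cases $\sigma=0$ or $\eta=0$ collapse the relation to two terms and are dispatched in the same way). The hypothesis of $(i)$ says exactly that these three residues are pairwise distinct modulo $p$, whence the three valuations themselves are pairwise distinct integers. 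This contradicts the isosceles triangle principle, so no such $\sigma,\eta$ can exist.

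For cases $(ii)$ and $(iii)$ I would symmetrize \eqref{eqn:furtherdescent} using the conjugate. Since $v,n,m,\sigma\in\Q$ are fixed by the nontrivial automorphism of $K/\Q$ and $\sqrt{-m}\mapsto-\sqrt{-m}$, conjugation yields
\[
v\sigma^p - n\sqrt{-m} = \overline{\epsilon}\,\overline{\eta}^p.
\]
Adding this to \eqref{eqn:furtherdescent} gives $2v\sigma^p = \epsilon\eta^p + \overline{\epsilon}\,\overline{\eta}^p$, and subtracting gives $2n\sqrt{-m} = \epsilon\eta^p - \overline{\epsilon}\,\overline{\eta}^p$. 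In the first relation the three terms have $\fq$-valuations congruent modulo $p$ to $\ord_\fq(2v)$, $\ord_\fq(\epsilon)$, $\ord_\fq(\overline{\epsilon})$; in the second, to $\ord_\fq(2n\sqrt{-m})$, $\ord_\fq(\epsilon)$, $\ord_\fq(\overline{\epsilon})$. Under hypothesis $(ii)$ (respectively $(iii)$) these residues are pairwise distinct modulo $p$, so the corresponding term-valuations are pairwise distinct integers, and the isosceles triangle principle again forbids the relation. This settles all three cases.

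The argument is short, and the only point requiring care is bookkeeping: one must track which terms carry a genuine $p$-th power (so their valuation is pinned down only modulo $p$) and which ones — namely $n\sqrt{-m}$ together with the rational factors $2v$ and $2n\sqrt{-m}$ — do not, and one must confirm that the quantities in the hypotheses are nonzero so that comparing residues modulo $p$ is legitimate. There is no genuine analytic or arithmetic obstacle here; all the content lies in the (standard) decision to pass to the conjugate equation and form the sum and difference before taking valuations.
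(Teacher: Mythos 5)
Your proof is correct and matches the standard argument for this lemma (which the paper itself states without proof, deferring to \cite{Patelthesis, BPS2, ArgaezPatel}): apply the ultrametric principle that in a three-term vanishing sum the minimum $\ord_\fq$ must be attained twice, to \eqref{eqn:furtherdescent} itself for $(i)$ and to the sum and difference of \eqref{eqn:furtherdescent} with its conjugate $v\sigma^p - n\sqrt{-m} = \overline{\epsilon}\,\overline{\eta}^p$ for $(ii)$ and $(iii)$, noting that $p$-th power factors only shift valuations by multiples of $p$. Your bookkeeping, including the nonvanishing checks and the degenerate cases $\sigma=0$, $\eta=0$, is handled correctly.
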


\begin{lemma}\label{lem:furtherdescent}
Let $q=2k p+1$ be a prime. 
Suppose $q\OO=\fq_1 \fq_2$ where $\fq_1$, $\fq_2$
are distinct, and such that $\ord_{\fq_j}(\epsilon)=0$
for $j=1$, $2$. Let 
\[
\chi(p,q)=\{ \eta^p \; : \; \eta \in \F_q \}.
\]
Let
\[
C(p,q)=\{\zeta \in \chi(p,q) \; : \;
((v \zeta+n\sqrt{-m})/\epsilon)^{2k} \equiv \text{$0$ or $1 \mod{\fq_j}$
for $j=1$, $2$}\}.
\]
Suppose $C(p,q)=\emptyset$. 
Then there is no $\sigma \in \Z$
and $\eta \in K$ satisfying \eqref{eqn:furtherdescent}.
\end{lemma}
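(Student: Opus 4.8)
The plan is to carry out, in the residue fields of the two primes $\fq_1,\fq_2$ above $q$, the same kind of $p$-th-power residue reduction that underlies Lemma~\ref{lem:Sophiecriterion}. Suppose, for contradiction, that there exist $\sigma\in\Z$ and $\eta\in K^*$ satisfying \eqref{eqn:furtherdescent}. The first task is to verify that $\eta$ is integral at each $\fq_j$, so that reduction modulo $\fq_j$ makes sense. Since $v,n,\sigma$ are rational integers and $\sqrt{-m}\in\OO$, the left-hand side $v\sigma^p+n\sqrt{-m}$ lies in $\OO$, hence $\ord_{\fq_j}(v\sigma^p+n\sqrt{-m})\ge 0$. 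Taking $\fq_j$-valuations in \eqref{eqn:furtherdescent} and using the hypothesis $\ord_{\fq_j}(\epsilon)=0$ gives $p\,\ord_{\fq_j}(\eta)=\ord_{\fq_j}(v\sigma^p+n\sqrt{-m})\ge 0$, so $\ord_{\fq_j}(\eta)\ge 0$. Thus $\eta$ reduces to a well-defined element of $\OO/\fq_j\cong\F_q$, while $\epsilon$ reduces to a unit.

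Next I would reduce \eqref{eqn:furtherdescent} modulo each $\fq_j$ and extract the $p$-th-power structure. Because $\sigma\in\Z$ and $\fq_j\cap\Z=q\Z$, the image of $\sigma$ in $\OO/\fq_j\cong\F_q$ is $\sigma\bmod q$, so $\zeta:=\sigma^p\bmod q$ is a $p$-th power in $\F_q$, that is, $\zeta\in\chi(p,q)$. Reducing \eqref{eqn:furtherdescent} modulo $\fq_j$ and dividing by the unit $\epsilon$ yields
\[
(v\zeta+n\sqrt{-m})/\epsilon\equiv \eta^p \pmod{\fq_j},\qquad j=1,2,
\]
where the congruence at $\fq_1$ and at $\fq_2$ uses the corresponding reduction of $\sqrt{-m}$, and both hold because \eqref{eqn:furtherdescent} is an identity in $\OO$.

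The crux is then a single application of Fermat's little theorem in $\F_q$. Raising the displayed congruence to the $2k$-th power and recalling that $q-1=2kp$ gives
\[
\big((v\zeta+n\sqrt{-m})/\epsilon\big)^{2k}\equiv \eta^{2kp}=\eta^{q-1}\pmod{\fq_j}.
\]
If $\eta\equiv 0\pmod{\fq_j}$ the right-hand side is $0$; otherwise $\eta^{q-1}\equiv 1\pmod{\fq_j}$. In either case the quantity is $0$ or $1$ modulo $\fq_j$ for each $j=1,2$, so by definition $\zeta\in C(p,q)$. This contradicts the hypothesis $C(p,q)=\emptyset$ and completes the proof.

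I do not anticipate a genuine obstacle here: the only point requiring care is the integrality of $\eta$ at $\fq_1$ and $\fq_2$, which is exactly what the assumption $\ord_{\fq_j}(\epsilon)=0$ secures in the valuation computation above, together with checking that $\sqrt{-m}$ and $\epsilon$ reduce unambiguously (and $\epsilon$ invertibly) modulo each $\fq_j$. Everything else rests on the elementary fact that the $(q-1)$-st power of any element of $\F_q$ equals $0$ or $1$.
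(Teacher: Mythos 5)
Your proof is correct: the valuation computation showing $\ord_{\fq_j}(\eta)\ge 0$ from $\ord_{\fq_j}(\epsilon)=0$, the reduction of \eqref{eqn:furtherdescent} modulo each $\fq_j$ with $\zeta=\sigma^p \bmod q \in \chi(p,q)$ common to both primes, and the observation $\eta^{2kp}=\eta^{q-1}\equiv 0$ or $1 \pmod{\fq_j}$ together force $\zeta\in C(p,q)$, contradicting $C(p,q)=\emptyset$. The paper states this lemma without proof (it is quoted from \cite{Patelthesis,BPS2,ArgaezPatel}), and your argument is precisely the standard one given there, the natural analogue over $\OO/\fq_j\cong\F_q$ of the Fermat-little-theorem argument behind Lemma~\ref{lem:Sophiecriterion}.
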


\subsection{Thue equations}
Finally for the remaining equations that couldn't be ruled out, they can be considered Thue equations by letting $\sigma = w_2$ and $\tau = w_1^2$
\[
a\sigma^p - b \tau^p = c 
\]
where the exponent is a prime $p$. We use \texttt{Magma's} Thue solver \cite{magma} and \texttt{PARI/GP's} \textit{thueinit}, \textit{thue} commands as the final test to determine whether the equations has solutions.

\subsection{Prime divisors of Lehmer sequences}

A \textit{Lehmer pair} is a pair $\alpha$, $\beta$ of algebraic integers such that $(\alpha+\beta)^2$ and $\alpha\beta$ are nonzero coprime rational integers and $\alpha/\beta$ is not a root of unity. The \textit{Lehmer sequence} associated to the Lehmer pair $(\alpha,\beta)$ is
\begin{align}\label{eqn:lehmersequence}
\tilde{u}_n=\tilde{u}_n(\alpha,\beta)=\begin{cases}
\dfrac{\alpha^n-\beta^n}{\alpha-\beta} & n~\text{is odd}\\
\dfrac{\alpha^n-\beta^n}{\alpha^2-\beta^2} & n~\text{is even}
\end{cases}
\end{align}

A prime $p$ is called a \textit{primitive divisor} of $\tilde{u}_n$ if it divides $\tilde{u}_n$ but does not divide $(\alpha^2-\beta^2)\cdot\tilde{u}_1\cdots\tilde{u}_{n-1}$. We now state the following celebrated theorem due to Bilu, Hanrot and Voutier \cite{BHV}.

\begin{theorem}\label{BHV}
Let $\alpha$, $\beta$ be a Lehmer pair. Then $\tilde{u}_n(\alpha,\beta)$ has a primitive root for all $n>30$ and for all prime $n>13$.
\end{theorem}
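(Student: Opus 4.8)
This is the celebrated theorem of Bilu, Hanrot and Voutier, whose complete proof is far beyond a short sketch; I will only outline the architecture I would follow, resting on two pillars: an effective lower bound for linear forms in logarithms, and a large but finite computation. The starting point is the cyclotomic factorisation. Writing $\zeta_d=e^{2\pi i/d}$ and $\Phi_d(\alpha,\beta)=\prod_{\gcd(k,d)=1}(\alpha-\zeta_d^k\beta)$ for the homogenised cyclotomic polynomial, one has $\alpha^n-\beta^n=\prod_{d\mid n}\Phi_d(\alpha,\beta)$, and each $\Phi_d(\alpha,\beta)$ is a rational integer. After accounting for the normalising denominator in \eqref{eqn:lehmersequence}, a rational prime $\ell$ is a primitive divisor of $\tilde u_n$ precisely when $\ell\mid\Phi_n(\alpha,\beta)$ while $\ell\nmid n(\alpha^2-\beta^2)$. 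A classical analysis of the $\ell$-adic valuations shows that the only possible non-primitive prime factor of $\Phi_n(\alpha,\beta)$ is the largest prime factor of $n$, occurring to a bounded power; hence if $\tilde u_n$ has no primitive divisor, then $|\Phi_n(\alpha,\beta)|$ is at most the largest prime factor of $n$, up to a small fixed factor, and in particular extremely small.

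The second step is to contradict this smallness analytically. Since $|\Phi_n(\alpha,\beta)|=\prod_{\gcd(k,n)=1}|\alpha-\zeta_n^k\beta|$, the delicate case is that of a complex-conjugate Lehmer pair, where $|\alpha|=|\beta|$ and $\alpha/\beta=e^{i\theta}$ lies on the unit circle. Each factor then has modulus comparable to $|e^{i\theta}-\zeta_n^k|$, and the whole product is bounded below by a power of $|\alpha|$ unless $e^{i\theta}$ is abnormally close to some root of unity $\zeta_n^k$, i.e. unless $\theta$ is very near a rational multiple of $2\pi$. Controlling this closeness is exactly a lower bound for the two-logarithm linear form $\Lambda=n\log(\alpha/\beta)-2\pi i k$, and here I would invoke the Laurent--Mignotte--Nesterenko estimate to force $|\Lambda|$ to decay no faster than a fixed power of $\log n$. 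Combined with the first step, this yields an explicit constant $N_0$ beyond which $\tilde u_n$ necessarily has a primitive divisor.

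The remaining and, in my view, decisive difficulty is the verification in the window $13<n\le N_0$ (for prime $n$) and $30<n\le N_0$ (in general), since the effective $N_0$ coming from Baker's method is very large. For each such $n$ the failure of a primitive divisor translates, through the cyclotomic identity above, into a finite family of Thue equations of the shape $\Phi_n(X,Y)=\pm d$ with $d\mid n$, whose variables are the defining parameters of the Lehmer pair. I would solve these exhaustively: for small $n$ by hand using the explicit factorisation of $\Phi_n$, and for the bulk by machine solution of the Thue equations, cross-checked against Voutier's precomputed tables of sporadic pairs. This shows that every Lehmer pair lacking a primitive divisor occurs at $n\le 30$, respectively prime $n\le 13$, completing the list of exceptions and hence the theorem. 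It is precisely this exhaustive computation, rather than any single analytic estimate, that makes the result a landmark, and it is the reason we use it here only as a black box.
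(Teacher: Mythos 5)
You should know at the outset that the paper contains no proof of this statement: Theorem~\ref{BHV} is imported verbatim from Bilu, Hanrot and Voutier \cite{BHV} and used strictly as a black box (incidentally, ``primitive root'' in the statement is a typo for ``primitive divisor''). So there is no internal proof to compare against; the only meaningful check is whether your outline is faithful to the actual BHV argument, and on the whole it is. The three pillars you name are the right ones: the cyclotomic factorisation $\alpha^n-\beta^n=\prod_{d\mid n}\Phi_d(\alpha,\beta)$ together with the classical valuation lemma showing that the non-primitive part of $\Phi_n(\alpha,\beta)$ is essentially the largest prime factor of $n$ to a bounded power; lower bounds for the two-logarithm linear form $\Lambda=n\log(\alpha/\beta)-2k\pi i$ via Laurent--Mignotte--Nesterenko, which is exactly the tool BHV use and which correctly isolates the hard case of complex-conjugate pairs with $|\alpha|=|\beta|$; and a terminal finite verification resolved through Thue-type equations and Voutier's tables. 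Your identification of the conclusion's shape --- an effective $N_0$ from the analytic estimate, then exhaustion below it --- is also correct.

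Two calibrations are worth recording. First, the division of labour in the literature is slightly different from what you describe: Voutier had already classified all Lehmer pairs without primitive divisors for $n\le 30$ (this is where the Thue equations are solved), and the main achievement of \cite{BHV} is the analytic and combinatorial work showing no exceptions exist for $n>30$; the reduction from the Baker-type bound $N_0$ down to $30$ is done by refined estimates and congruence sieving rather than by machine solution of Thue equations across the whole window, which would be infeasible since the relevant parameters (the pair, not just $n$) also vary. Second, your smallness claim should be stated with care: the correct deduction is that if $\tilde u_n$ lacks a primitive divisor then $|\Phi_n(\alpha,\beta)|$ divides a quantity bounded in terms of the largest prime factor of $n$ alone, and it is this against which the lower bound $\log|\Phi_n(\alpha,\beta)|\gg \varphi(n)\log|\alpha| - (\text{error from }\Lambda)$ is played off. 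As a blind reconstruction of a famously long proof your sketch is accurate in architecture, but it is an outline, not a proof --- which is precisely the status the paper itself assigns to the result.
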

It will be necessary to use Lehmer pairs to prove for certain values of $r$ that $(\ref{eq:main})$ has no solutions.

\section{Descent to eight cases}\label{casen2}
Equation~\eqref{eq:main} can be rewritten as:
$
5x(x^2+6r^2) = y^p.
$
Letting $y = 5w$, we can rewrite as:
\begin{equation}\label{eq:mainfact}
x(x^2+6r^2) = 5^{p-1}w^p.
\end{equation}
Note that $\gcd(x, x^2 + 6r^2) \in \{1,2,3,6\}$ depending on whether $2,3$ divides $x$ or not. Thus,
we are now able to divide into eight cases.

\begin{adjustbox}{width =\textwidth}
\begin{tabular}{|c|c|c|c|}
\hline
{\bf Case} & {\bf Conditions on $x$} & {\bf Descent equations} & {\bf Equation of signature $(p,p,2)$}\\
\hline\hline
$1$ &  $5 \mid x$ and $6 \nmid x$ &
\begin{tabular}{@{}c@{}}$x=5^{p-1}w_1^p$ \\ $x^2+6r^2=w_2^p$ \end{tabular} & $w_2^p-5^{2p-2}w_1^{2p}=6r^2$ 
\\

\hline 
$2$ &  $5 \mid x$ and $2 \mid x$ and $3\nmid x$ &
\begin{tabular}{@{}c@{}}$x=2^{p-1}5^{p-1}w_1^p$ \\ $x^2+6r^2=2w_2^p$ \end{tabular}  & 
$w_2^p-2^{2p-3}5^{2p-2} w_1^{2p}=3r^2$ \\

\hline 
$3$ &  $5 \mid x$ and $3 \mid x$ and $2\nmid x$&
\begin{tabular}{@{}c@{}}$x=3^{p-1}5^{p-1}w_1^p$ \\ $x^2+6r^2=3w_2^p$ \end{tabular}  & 
$w_2^p-3^{2p-3}5^{2p-2} w_1^{2p}=2r^2$\\

\hline 
$4$ &  $5 \mid x$ and $6 \mid x$ &
\begin{tabular}{@{}c@{}}$x=6^{p-1}5^{p-1}w_1^p$ \\ $x^2+6r^2=6w_2^p$ \end{tabular}  & 
$w_2^p-6^{2p-3}5^{2p-2}w_1^{2p}=r^2$\\

\hline
$5$ &  $5 \nmid x$ and $6 \nmid x$ &
\begin{tabular}{@{}c@{}}$x=w_1^p$ \\ $x^2+6r^2=5^{p-1}w_2^p$ \end{tabular}  & 
$5^{p-1}w_2^p-w_1^{2p}=6r^2$\\

\hline 
$6$ &  $5 \nmid x$ and $2 \mid x$ and $3\nmid x$ &
\begin{tabular}{@{}c@{}}$x=2^{p-1}w_1^p$ \\ $x^2+6r^2=2\cdot5^{p-1}w_2^p$ \end{tabular} & 
$5^{p-1} w_2^p-2^{2p-3} w_1^{2p}=3r^2$ \\

\hline 
$7$ &  $5 \nmid x$ and $3 \mid x$ and $2\nmid x$&
\begin{tabular}{@{}c@{}}$x=3^{p-1}w_1^p$ \\ $x^2+6r^2=3\cdot5^{p-1}w_2^p$ \end{tabular} & 
$5^{p-1}w_2^p-3^{2p-3} w_1^{2p}=2r^2$\\

\hline 
$8$ &  $5 \nmid x$ and $6 \mid x$ &
\begin{tabular}{@{}c@{}}$x=6^{p-1}w_1^p$ \\ $x^2+6r^2=6\cdot5^{p-1}w_2^p$ \end{tabular} & 
$5^{p-1} w_2^p-6^{2p-3}w_1^{2p}=r^2$\\

\hline 
\end{tabular}
\label{table:Cases}
\end{adjustbox}

\section{Tables of computations}\label{sec:tablecomps}

In this section, we apply the Theorems and Lemmas of Section~\ref{sec:background} in order to fully resolve equation~\eqref{eq:main} and prove Theorem~\ref{thm:principal01}.

\subsection{Case 1}

We first apply Theorem $\ref{thm:mignotte}$ to obtain the bound $p\leq 34365$ when $|r|\leq 1.7\times 10^{2486}$. Thus, when we focus on $5\leq p \leq 34365$ and $1\leq r\leq 10^6$ we have the following table containing the obtained information after computational calculations.
~

\[
\begin{adjustbox}{width =\textwidth}
\begin{tabular}{|c|c|c|c|c|}
\hline
\text{Exponent }p & \makecell{Number of eqns\\ surviving \\Lemma \ref{lem:Sophiecriterion}}& \makecell{Number of eqns \\surviving local \\solubility tests} & \makecell{Number of eqns \\surviving \\further descent} & \makecell{Thue eqns \\ not solved  \\ by Magma} \\
\hline
5 & 258519 & 33667 & 52 & 0\\
\hline
7 & 102711 & 46794 & 5 & 0\\
\hline
11 & 2690 & 1364 & 1 & 0\\
\hline
13 & 5855 & 3044 & 1 & 0 \\
\hline
17 & 752 & 415 & 0 & 0 \\
\hline
19 & 1644 & 858 & 0 & 0\\
\hline
23 & 10 & 6 & 0 & 0\\
\hline
29 & 23 & 9 & 0 &0 \\
\hline
31 & 61 & 32 & 0 & 0\\
\hline
37 & 1 & 1 & 0 & 0\\
\hline
41 & 2 & 1 & 0 & 0\\
\hline
$43 \leq p \leq 34365$ & 0 &0 &0 &0\\
\hline
\end{tabular}
\end{adjustbox}
\]

\subsection{Case 2}

For $p=5$ we have $w_2^5-2^7\cdot5^8w_1^{10}=3r^2$. Letting $X=w_2/w_1^2$ and $Y=3r/w_1^5$ we obtain the hyperelliptic curve 
\[
Y^2=3X^5-3\cdot2^7\cdot 5^8
\]
whose Jacobian has rank 1. We can show, using Magma, that $C(\Q)=\{\infty\}$ which implies the curve only has trivial solutions.

Using Theorem $\ref{thm:mignotte}$ we bound $p\leq 56565$ for $|r|\leq 6.8\times 10^{4092}$. Thus, when we focus on $7\leq p \leq 56565$ and $1\leq r\leq 10^6$ we have the following table.
\[
\begin{adjustbox}{width =\textwidth}
\begin{tabular}{|c|c|c|c|c|}
\hline
\text{Exponent }p & \makecell{Number of eqns\\ surviving \\Lemma \ref{lem:Sophiecriterion}}& \makecell{Number of eqns \\surviving local \\solubility tests} & \makecell{Number of eqns \\surviving \\further descent} & \makecell{Thue eqns \\ not solved  \\ by Magma}\\
\hline
7 & 60261 & 21654 & 2 & 0\\
\hline
11 & 895 & 347 & 1 & 0\\
\hline
13 & 783 & 401 & 1 & 0\\
\hline
17 & 126 & 69 & 1 & 1\\
\hline
19 & 656 & 296 & 1 & 1\\
\hline
23 & 3 & 1 & 1 & 1\\
\hline
31 & 4 & 3 & 0 & 0\\
\hline
$37\leq p\leq 56565$ & 0 & 0 & 0 & 0\\
\hline
\end{tabular}
\end{adjustbox}
\]
The Thue equations that could not be solved by Magma, were solved by PARI/GP
\[
\begin{tabular}{|c|c|c|c|}
\hline
$p$&$r$& \textit{Thue equation} & \textit{Solution}\\
\hline
17 & $3^8$ & $w_2^{17}-2^{31}\cdot5^{32}w_1^{34}=3^{17}$ & $(3,0)$ \\
\hline
19 & $3^9$ & $w_2^{19}-2^{35}\cdot5^{36}w_1^{38}=3^{19}$ & $(3,0)$ \\
\hline
23 & $3^{11}$ & $w_2^{23}-2^{43}\cdot5^{44}w_1^{46}=3^{23}$ & $(3,0)$\\
\hline
\end{tabular}
\]
using the commands \textit{thueinit} and \textit{thue}. Observe that the solutions found contradict the condition $w_1\cdot w_2\neq 0$. Moreover, observe that all the equations that were solved are of signature $(p,2p,p)$.

\subsection{Case 3}

For $p=5$ we have $w_2^5-3^7\cdot 5^8w_1^{10}=2r^2$. Choosing the change of variable $X=w_2/w_1^2$ and $Y=2r/w_1^5$ we obtain the hyperelliptic curve 
\[
Y^2=2X^5-2\cdot3^7\cdot 5^8
\]
whose Jacobian has rank 1. We can show, using Magma, that $C(\Q)=\{\infty\}$ which implies the curve only has trivial solutions.

Using Theorem $\ref{thm:mignotte}$ we bound $p\leq 69551$ for $|r|\leq 3.8\times 10^{5881}$. Thus, when we focus on $7\leq p \leq 69551$ and $1\leq r\leq 10^6$ we have the following table.
\[
\begin{adjustbox}{width =\textwidth}
\begin{tabular}{|c|c|c|c|c|}
\hline
\text{Exponent }p & \makecell{Number of eqns\\ surviving \\Lemma \ref{lem:Sophiecriterion}}& \makecell{Number of eqns \\surviving local \\solubility tests} & \makecell{Number of eqns \\surviving \\further descent} & \makecell{Thue eqns \\ not solved  \\ by Magma} \\
\hline
7  & 18911 & 8045 & 3 & 0 \\ \hline
11 & 1639  & 690  & 2 & 0 \\ \hline
13 & 4059  & 3314 & 2 & 0 \\ \hline
17 & 137   & 76   & 1 & 1 \\ \hline
19 & 271   & 141  & 1 & 1 \\ \hline
23 & 11    & 5    & 1 & 1 \\ \hline
29 & 5     & 2    & 1 & 1 \\ \hline
31 & 9     & 6    & 1 & 1 \\ \hline
37 & 1     & 1    & 1 & 1 \\ \hline
$41\leq p\leq 69551$ & 0 & 0 & 0 & 0 \\ \hline
\end{tabular}
\end{adjustbox}
\]

The Thue equations that could not be solved by Magma, were solved by PARI/GP
\[
\begin{tabular}{|c|c|c|c|}
\hline
$p$&$r$& \textit{Thue equation} & \textit{Solution}\\
\hline
17 & $2^8$ & $w_2^{17}-3^{31}\cdot5^{32}w_1^{34}=2^{17}$ & $(2,0)$ \\
\hline
19 & $2^9$ & $w_2^{19}-3^{35}\cdot5^{36}w_1^{38}=2^{19}$ & $(2,0)$ \\
\hline
23 & $2^{11}$ & $w_2^{23}-3^{43}\cdot5^{44}w_1^{38}=2^{23}$ & $(2,0)$ \\
\hline
29 & $2^{14}$ & $w_2^{29}-3^{55}\cdot5^{56}w_1^{38}=2^{29}$ & $(2,0)$ \\
\hline
31 & $2^{15}$ & $w_2^{31}-3^{59}\cdot5^{60}w_1^{38}=2^{31}$ & $(2,0)$ \\
\hline
37 & $2^{18}$ & $w_2^{37}-3^{71}\cdot5^{72}w_1^{38}=2^{37}$ & $(2,0)$ \\
\hline
\end{tabular}
\]
using the commands \textit{thueinit} and \textit{thue}. Observe that the solutions found contradict the condition $w_1\cdot w_2\neq 0$. Moreover, observe that all the equations that were solved are of signature $(p,2p,p)$.

\subsection{Case 4}

For $p=5$ we have $w_2^5-6^7\cdot5^8w_1^{10}=r^2$. Choosing the change of variable $X=w_2/w_1^2$ and $Y=r/w_1^5$ we obtain the hyperelliptic curve 
\[
Y^2=X^5-6^7\cdot 5^8
\]
whose Jacobian has rank 1. We can show, using Magma, that $C(\Q)=\{\infty\}$ which implies the curve only has trivial solutions.

Using Theorem $\ref{thm:mignotte}$ we bound $p\leq 91751$ for $|r|\leq 1.9\times 10^{6639}$. Thus, when we focus on $7\leq p \leq 91751$ and $2\leq r\leq 10^6$ we have the following table.

\[
\begin{adjustbox}{width =\textwidth}
\centering
\begin{tabular}{|c|c|c|c|c|}
\hline
\text{Exponent }p & \makecell{Number of eqns\\ surviving \\Lemma \ref{lem:Sophiecriterion}}& \makecell{Number of eqns \\surviving local \\solubility tests} & \makecell{Number of eqns \\surviving \\further descent} & \makecell{Thue eqns \\ not solved  \\ by Magma} \\
\hline
7  & 22887 & 12311 & 1 & 0\\
\hline
11 & 1469  & 588 & 0 & 0 \\
\hline
13 & 237 & 97 & 0 & 0 \\
\hline
17 & 205 & 118 & 0 & 0 \\
\hline
19 & 2480 & 1273 & 0 & 0 \\
\hline
23 & 2 & 2 & 0 & 0 \\
\hline
29 & 2 & 2 & 0 & 0 \\
\hline
31 & 9 & 4 & 0 & 0 \\
\hline
37 & 2 & 1 & 0 & 0 \\
\hline
$41\leq 91751$ & 0 & 0 & 0 & 0 \\
\hline
\end{tabular}
\end{adjustbox}
\]

We noticed a remarkable behaviour when $r=1$ and $7\leq p\leq 91751$.
 
 We observed that application of Lemma \ref{lem:Sophiecriterion} and the local solubility tests were always failing for every $p$. 
 Due to this situation, we decided to use \textit{Lehmer pairs} to solve it.

Let $K=\Q(\sqrt{-6})$ and write $\mathcal{O}_{K}$ for its ring of integers. This has class group isomorphic to $\Z/2\Z$. We consider the equation $x^2+6=6w_2^p$ where $x=30^{p-1}w_1^p$ and $w_1$, $w_2$ as before. Take 
\begin{align*}
6w_2^p&=x^2+6\\
&=(x+\sqrt{-6})(x-\sqrt{-6}).
\end{align*}
It follows that  
\begin{align*}
(x+\sqrt{-6})\mathcal{O}_K=\p_2\p_3\zeta^p
\end{align*}
where $\p_2=(2,\sqrt{-6})$, $\p_3=(3,\sqrt{-6})$ are the primes above $2$, $3$ and $\zeta$ is an ideal in $\mathcal{O}_K$. We write
\begin{align*}
(x+\sqrt{-6})\mathcal{O}_K=2^{(1-p)/2}3^{(1-p)/2}(\p_2\p_3\zeta)^p
\end{align*}
and $\q=\p_2\p_3$. It follows that $\q\zeta=(\gamma)\in\OO_K$ is principal and $\gamma=u+v\sqrt{-6}\in\OO_K$ with $u,v\in\Z$. After a possible change of sign we obtain 
\[
x+\sqrt{-6}=\dfrac{\gamma^p}{6^{(p-1)/2}}
\]
Subtracting and conjugating we obtain 
\begin{equation}\label{eq:subconj}
\dfrac{\gamma^p}{6^{(p-1)/2}}-\dfrac{\overline{\gamma}^p}{6^{(p-1)/2}}=2\sqrt{-6}
\end{equation}
or equivalently
\[\dfrac{\gamma^p}{6^{p/2}}-\dfrac{\overline{\gamma}^p}{6^{p/2}}=2i.
\]
Let $L=\Q(\sqrt{-1},\sqrt{6})$. Taking $\alpha=\gamma/\sqrt{6}$ and $\beta=\overline{\gamma}/\sqrt{6}$. 

\begin{lemma}\label{lemma:van01}
Let $\alpha$ and $\beta$ as above. Then $\alpha$ and $\beta$ are algebraic integers. Moreover $(\alpha+\beta)^2$ and $\alpha\beta$ are nonzero coprime rational integers and $\alpha/\beta$ is not a unit.
\end{lemma}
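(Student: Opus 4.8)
The plan is to trace everything back to the element $\delta := \gamma/\sqrt{-6}$, which lies in $\OO_K$ because the ideal identity $\q\zeta = (\gamma)$ with $\q = \p_2\p_3 = (\sqrt{-6})$ shows $\sqrt{-6}\mid\gamma$. Writing $\delta = a + b\sqrt{-6}$ with $a,b\in\Z$ (so that $u=-6b$ and $v=a$ in $\gamma = u+v\sqrt{-6}$, the divisibility $6\mid u$ being automatic), I would first put the two defining quantities in closed form. Since $\sqrt{-6}/\sqrt6$ is a square root of $-1$, one has $\alpha = \pm i\,\delta$ and $\beta = \mp i\,\overline{\gamma}/\sqrt{-6}\cdot(\cdots) = \mp i\,\overline{\delta}$; this immediately gives that $\alpha,\beta$ are algebraic integers, that $\alpha\beta = \delta\overline{\delta} = N_{K/\Q}(\delta) = a^2+6b^2$, and that $(\alpha+\beta)^2 = \big(i(\delta-\overline\delta)\big)^2 = 24b^2$. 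Both are rational integers, as required.

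For the nonzero claim I would argue that $a=v\neq 0$ always, since $v=0$ would make $\gamma\in\Q$ and hence force $x+\sqrt{-6}=\gamma^p/6^{(p-1)/2}\in\Q$, impossible; and that $b\neq0$ for any nontrivial solution, since $b=-u/6=0$ leads back (via \eqref{eq:subconj}) to $x=0$, the excluded trivial solution. Thus $\alpha\beta=a^2+6b^2>0$ and $(\alpha+\beta)^2=24b^2\neq 0$.

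The main work is the coprimality of $24b^2$ and $a^2+6b^2 = N_{K/\Q}(\delta) = w_2$. First, reducing $x^2+6=6w_2^p$ modulo suitable powers of $2$ and $3$ and using $30\mid x$ shows $\gcd(w_2,6)=1$, so neither $2$ nor $3$ is a common factor. For a prime $\ell\geq 5$ dividing both quantities, $\ell\mid b$ together with $\ell\mid w_2=a^2+6b^2$ forces $\ell\mid a$, so it suffices to prove $\gcd(a,b)=1$. For this I would exploit that $\zeta$ and $\overline{\zeta}$ are coprime: any prime ideal dividing both $(x+\sqrt{-6})$ and $(x-\sqrt{-6})$ divides their sum $(2x)$ and difference $(2\sqrt{-6})=\p_2^3\p_3$, hence lies in $\{\p_2,\p_3\}$. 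Consequently a rational prime $\ell\geq 5$ dividing $\delta$ (equivalently dividing both $a$ and $b$) would give $(\ell)\mid\zeta$ and, whether $\ell$ is inert or split in $K$, would make $\zeta$ and $\overline{\zeta}$ share a prime factor, a contradiction. This yields $\gcd(a,b)=1$ and hence $\gcd\big((\alpha+\beta)^2,\alpha\beta\big)=1$.

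Finally, for ``$\alpha/\beta$ is not a unit,'' I would observe that $\alpha/\beta=\gamma/\overline{\gamma}$ has absolute value $1$ in every archimedean embedding of $L$, so by Kronecker's theorem it is a unit exactly when it is a root of unity; it therefore suffices to show it is not a root of unity. Since $L=\Q(\sqrt{-1},\sqrt6)$ has degree $4$, contains $i$, but contains neither $\sqrt2$ nor $\sqrt3$ (and is unramified at $5$), its only roots of unity are $\{\pm1,\pm i\}$. The values $\pm i$ are excluded because $\gamma/\overline{\gamma}=\pm i$ would place $i\in K$, while $+1$ and $-1$ are excluded because $\gamma=\overline{\gamma}$ and $\gamma=-\overline{\gamma}$ force $v=0$ and $u=0$ respectively, both already ruled out. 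I expect the coprimality step --- in particular pinning down $\gcd(a,b)=1$ via the coprimality of $\zeta$ and $\overline{\zeta}$ and the behaviour of the ramified primes $\p_2,\p_3$ --- to be the main obstacle; the remaining assertions are short computations.
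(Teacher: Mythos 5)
Your proposal is correct, and it takes a genuinely different route from the paper at several points, generally a tighter one. The paper works directly with $\gamma=u+v\sqrt{-6}$: it gets integrality of $\alpha,\beta$ from $\q\mid\gamma,\overline{\gamma}$, integrality of $(\alpha+\beta)^2$ and $\alpha\beta$ from the divisibilities $\p_2,\p_3\mid u$, coprimality by taking a common prime $\p$ of $\OO_L$ dividing $\alpha$ and $\beta$ and appealing (quite tersely) to $\p\mid (w_2)\OO_L$, $\p\mid 2\sqrt{-6}$ and the coprimality of $(w_1,w_2)$, and the last claim by noting that the only roots of unity of $K$ are $\pm 1$. Your substitution $\delta=\gamma/\sqrt{-6}$, with $(\delta)=\zeta$, buys closed forms $\alpha\beta=a^2+6b^2=w_2$ and $(\alpha+\beta)^2=24b^2$ that make the integrality claims immediate; it also silently corrects an arithmetic slip in the paper, since $\alpha+\beta=2u/\sqrt{6}$ gives $(\alpha+\beta)^2=2u^2/3=24b^2$, not the printed $4u^2/3$. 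Your coprimality step is likewise different and more checkable: the explicit reductions mod $4$ and $9$ giving $\gcd(w_2,6)=1$, followed by the classical conjugate-ideal argument that any prime dividing both $(x+\sqrt{-6})$ and $(x-\sqrt{-6})$ lies in $\{\p_2,\p_3\}$. One phrasing caveat there: you assert outright that $\zeta$ and $\overline{\zeta}$ are coprime, but your sum-and-difference lemma leaves open that they share the self-conjugate primes $\p_2,\p_3$ (they do not, since $\ord_{\p_j}(x+\sqrt{-6})=1$ by the high divisibility of $x$ by $30$); since you only ever apply the claim to primes above $\ell\geq 5$, the contradiction you derive is valid as written, so this is looseness of wording rather than a gap. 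Finally, your Kronecker-theorem step proves the lemma's literal assertion that $\alpha/\beta$ is not a unit, whereas the paper only rules out roots of unity of $K$ (which is what the Lehmer-pair definition actually requires); both suffice for the application of Theorem~\ref{BHV}, but your version matches the statement as written and also handles the roots of unity $\pm i$ of $L$ that the paper never mentions.
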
 
\begin{proof}
Let $\q\OO_L=\sqrt{6}\OO_L$. By definition $\q|\gamma,\overline{\gamma}$ which implies that $\alpha$ and $\beta$ are algebraic integers. Now we compute $(\alpha+\beta)^2=4u^2/3$.

Since $\p_3|\sqrt{-6}$ we conclude that $\p_3|u$ and so $3|u$. So $(\alpha+\beta)^2$ is a rational integer, i.e., $(\alpha+\beta)^2\in\Z$. If $(\alpha+\beta)^2=0$ then $u=0$, however this will imply that $30^pw_1^p=0$ contradicting $w_1\cdot w_2\neq 0$. Thus $(\alpha+\beta)^2$ is a nonzero ration integer. Moreover $\alpha\beta=\gamma\overline{\gamma}/6$ is a nonzero rational integer since $3|u$ and $\p_2|\gamma,\overline{\gamma}$.

We now check that $(\alpha+\beta)^2$ and $\alpha\beta$ are coprime. Suppose they are not coprime. Then there exists a prime $\p$ of $\OO_L$ which divides both. Then $\p$ divides $\alpha$ and $\beta$ and from the equation above $\p$ divides $(w_2)\OO_L$ and $2\sqrt{-6}$. We contradicted our assumption  of $(w_1,w_2)$ being a nontrivial coprime solution.

Finally, $\alpha/\beta=\gamma/\overline{\gamma}\in\OO_K$ is not a root of unity because the only roots of unity in $K$ are $\pm1$ which implie $\gamma=\pm\overline{\gamma}$ implying either $u=0$ or $v=0$ which is a contradiction.
\end{proof}
From Lemma \ref{lemma:van01} we have that the pair $(\alpha,\beta)$ is Lehmer pair and we denote by $\tilde{u}_k$ the associate Lehmer sequence. Substituting we see that
\[
\left(\dfrac{\alpha-\beta}{2i}\right)\left(\dfrac{\alpha^p-\beta^p}{\alpha-\beta}\right)=1.
\]
Hence, we get 
\[
\left(\dfrac{\alpha^p-\beta^p}{\alpha-\beta}\right)=1,
\]
thus $v = \pm 1$. By theorem~\ref{BHV}, we immediately deduce that $p \in \{5,7,11,13\}$. For a given prime $p$, using equation~\eqref{eq:subconj}, we see that $u$ is a root of the polynomial:
\[
\frac{1}{2\cdot\sqrt{-6}\cdot\sqrt{6}^{(p-1)/2}}((u+v\sqrt{-6})^p - (u-v\sqrt{-6})^p) -1.
\]

Computing the roots of these polynomials, we find that there are no solutions. 

\subsection{Case 5}

For $p=5$ we have $5^4w_2^5-w_1^{10}=6r^2$. Choosing the change of variable $X=w_2/w_1^2$ and $Y=6r/w_1^5$ we obtain the hyperelliptic curve 
\[
Y^2=6\cdot5^4X^5-6
\]
whose Jacobian has rank 0. We can show, using Magma, that $C(\Q)=\{\infty\}$ which implies the curve only has trivial solutions.

Using Theorem $\ref{thm:mignotte}$ we bound $p\leq 17183$ for $|r|\leq 8.3\times 10^{1242}$. Thus, when we focus on $7\leq p \leq 17183$ and $1\leq r\leq 10^6$ we have the following table.
\[
\begin{adjustbox}{width =\textwidth}
\begin{tabular}{|c|c|c|c|c|}
\hline
\text{Exponent }p & \makecell{Number of eqns\\ surviving \\Lemma \ref{lem:Sophiecriterion}}& \makecell{Number of eqns \\surviving local \\solubility tests} & \makecell{Number of eqns \\surviving \\further descent} &\makecell{Thue eqns \\ not solved  \\ by Magma} \\
\hline
7 & 99944& 55754 & 71 & 0\\
\hline
11 & 3345 & 1417 & 0 & 0\\
\hline
13 & 871 & 446 & 0 & 0\\
\hline
17 & 1042 & 552 & 0 & 0\\
\hline
19 & 892 & 487 & 0 & 0\\
\hline
23 & 20 & 9 & 0 & 0 \\
\hline
29 & 2 & 2 & 0 & 0 \\
\hline  
31 & 23 & 11 &0 & 0\\
\hline
37 & 1 & 1 & 0 & 0\\
\hline
$41\leq p\leq 17183$& 0 &0 &0 & 0\\
\hline
\end{tabular}
\end{adjustbox}
\]

\subsection{Case 6}

Using Theorem $\ref{thm:mignotte}$ we bound $p\leq 9101$ for $|r|\leq 9.4\times 10^{657}$. Thus, when we focus on $5\leq p \leq 9101$ and $1\leq r\leq 10^6$ we have the following table.
\[
\begin{adjustbox}{width =\textwidth}
\begin{tabular}{|c|c|c|c|c|}
\hline
\text{Exponent }p & \makecell{Number of eqns\\ surviving \\Lemma \ref{lem:Sophiecriterion}}& \makecell{Number of eqns \\surviving local \\solubility tests} & \makecell{Number of eqns \\surviving \\further descent} & \makecell{Thue eqns \\ not solved  \\ by Magma} \\
\hline
5 & 94300 & 10723 & 35 &0\\
\hline
7 & 28798 & 22060 & 0 & 0\\
\hline
11 & 757 & 388 & 0 & 0\\
\hline
13 & 7923 & 4238 & 0 & 0\\
\hline 
17 & 95 & 47 & 0 & 0\\
\hline
19 & 686 & 310 & 0 & 0\\
\hline
23 & 1 & 1 & 0 & 0\\
\hline
29 & 5 & 4 & 0 & 0\\
\hline
31 & 14 & 7 & 0 & 0\\
\hline
37 & 2 & 2 & 0 & 0\\
\hline
$41\leq p\leq 9101$& 0 &0 &0 & 0\\
\hline
\end{tabular}
\end{adjustbox}
\]

\subsection{Case 7}

Using Theorem $\ref{thm:mignotte}$ we bound $p\leq 22515$ for $|r|\leq 5.4\times 10^{1628}$. Thus, when we focus on $5\leq p \leq 22515$ and $1\leq r\leq 10^6$ we have the following table.
\[
\begin{adjustbox}{width =\textwidth}
\begin{tabular}{|c|c|c|c|c|}
\hline
\text{Exponent }p & \makecell{Number of eqns\\ surviving \\Lemma \ref{lem:Sophiecriterion}}& \makecell{Number of eqns \\surviving local \\solubility tests} & \makecell{Number of eqns \\surviving \\further descent} & \makecell{Thue eqns \\ not solved  \\ by Magma} \\
\hline
5 & 36897 & 5233 & 94 &0\\
\hline
7 & 26109 & 12665 & 16 & 0\\
\hline
11 & 4079 & 1637 & 0 & 0 \\
\hline
13 & 1649 & 854 & 0 & 0 \\
\hline
17 & 945 & 606 & 0 & 0 \\
\hline
19 & 686 & 459 & 0 & 0 \\
\hline
23 & 12 & 5 & 0 & 0 \\
\hline
29 & 5 & 3 & 0 & 0 \\
\hline
31 & 35 & 21 & 0 & 0 \\
\hline
37 & 1 & 0 & 0 & 0 \\
\hline
41 & 1 & 0 & 0 & 0 \\ 
\hline
$43\leq p \leq 22515$ & 0 & 0 & 0 & 0 \\
\hline
\end{tabular}
\end{adjustbox}
\]

\subsection{Case 8}

For $p=5$ we have $5^4w_2^5-6^7w_1^{10}=r^2$. Choosing the change of variable $X=w_2/w_1^2$ and $Y=r/w_1^5$ we obtain the hyperelliptic curve 
\[
Y^2=5^4X^5-6^7
\]
whose Jacobian has rank 0. We can show, using Magma, that $C(\Q)=\{\infty\}$ which implies the curve only has trivial solutions.

Using Theorem $\ref{thm:mignotte}$ we bound $p\leq 44855$ for $|r|\leq 2.8\times 10^{3245}$. Thus, when we focus on $7\leq p \leq 44855$ and $1\leq r\leq 10^6$ we have the following table.
\[
\begin{adjustbox}{width =\textwidth}
\begin{tabular}{|c|c|c|c|c|}
\hline
\text{Exponent }p & \makecell{Number of eqns\\ surviving \\Lemma \ref{lem:Sophiecriterion}}& \makecell{Number of eqns \\surviving local \\solubility tests} & \makecell{Number of eqns \\surviving \\further descent} & \makecell{Thue eqns \\ not solved  \\ by Magma} \\
\hline
7 & 18672 & 6518 & 0 & 0\\
\hline
11 & 904 & 381 & 0 & 0\\
\hline
13 & 561 & 225 & 0 & 0\\
\hline
17 & 122 & 89 & 0 & 0\\
\hline
19 & 620 & 375 & 0 & 0\\
\hline
23 & 2 & 1 & 0 & 0\\
\hline
29 & 1 & 0 & 0 & 0\\
\hline
31 & 20 & 11 & 0 & 0\\
\hline
41 & 1 & 0 & 0 & 0\\
\hline
$43\leq p \leq 44855$ & 0 & 0 & 0 & 0\\
\hline
\end{tabular}
\end{adjustbox}
\]

This completes the proof of Theorem~\ref{thm:principal01}.

\section*{Acknowledgements}
\thispagestyle{empty}
I would like to thank CIMAT Unidad M\'erida, where  I worked under a postdoctoral fellowship provided by CONACYT grant FOMIX-YUC 221183. I would also like to thank Professor John E. Cremona and The University of Warwick for allowing me access to the Number Theory Group computers where the computations were done.

\bibliographystyle{plain}

\end{document}